%
\documentclass [12pt]{amsart}
\usepackage{amsmath,amssymb}
 \usepackage{amsthm, amsfonts}
 \usepackage{enumerate}
 \usepackage{amssymb,amsmath,amsthm, amsfonts}
 \usepackage{cite}
 \usepackage{xcolor}
\usepackage[pagewise]{lineno}
\newtheorem{theorem}{Theorem}[section]

\newtheorem{corollary}[theorem]{Corollary}
\theoremstyle{definition}
\newtheorem{definition}[theorem]{Definition}
\newtheorem{example}[theorem]{Example}

\theoremstyle{remark}

\numberwithin{equation}{section}

\begin{document}

\title [{{On $gr$-$C$-$2^{A}$-secondary submodules }}]{On $Gr$-$C$-$2^{A}$-secondary submodules }

 \author[{{T. Alwardat, K. Al-Zoubi and M. Al-Dolat}}]{\textit{Thikrayat Alwardat, Khaldoun Al-Zoubi}* and Mohammed Al-Dolat  }

\address
{\textit{ Thikrayat Alwardat, Department of Mathematics and
Statistics, Jordan University of Science and Technology, P.O.Box
3030, Irbid 22110, Jordan.}}
\bigskip
{\email{\textit{tdalwardat21@sci.just.edu.jo}}}

\address
{\textit{ Khaldoun Al-Zoubi , Department of Mathematics and
Statistics, Jordan University of Science and Technology, P.O.Box
3030, Irbid 22110, Jordan.}}
\bigskip
{\email{\textit{kfzoubi@just.edu.jo}}}

\address
{\textit{Mohammed Al-Dolat , Department of Mathematics and
Statistics, Jordan University of Science and Technology, P.O.Box
3030, Irbid 22110, Jordan.}}
\bigskip
{\email{\textit{mmaldolat@just.edu.jo}}}

 \subjclass[2010]{13A02, 16W50.}

\date{}
\begin{abstract}
Let $\Omega$ be a group with identity $e$, $\Gamma$ be a $\Omega$-graded commutative ring and $\Im$ a graded $\Gamma$-module. In this article, we introduce the concept of $gr$-$C$-$2^{A}$-secondary submodules and investigate some properties of this new class of graded submodules. A non-zero graded submodule $S$ of $\Im$ is said to be a $gr$-$C$-$2^{A}$-secondary submodule if whenever $r,s \in h(\Gamma)$, $L$ is a graded submodule of $\Im$, and $rs\,S\subseteq L$, then either $r\,S\subseteq L$ or $s\,S \subseteq L$ or $rs \in Gr(Ann_\Gamma(S))$.

 \end{abstract}
\keywords{ graded classical 2-absorbing secondary submodules, graded 2-absorbing submodules, graded 2-absorbing primary submodules.\\
$*$ Corresponding author}
 \maketitle


\section{Introduction and Preliminaries}
Throughout this article, we assume that $\Gamma$ is a commutative $\Omega$-graded ring with identity and $\Im$ is a unitary graded $\Gamma$-Module.

Let $\Omega$ be a group with identity $e$ and $\Gamma$ be a commutative ring with
identity $1_{\Gamma}$. Then $\Gamma$ is a \emph{$\Omega$-graded ring} if there exist
additive subgroups $\Gamma_{g}$ of $\Gamma$ such that $\Gamma=\bigoplus_{g\in \Omega}\Gamma_{g}$ and $%
\Gamma_{g}\Gamma_{h}\subseteq \Gamma_{gh}$ for all $g,h\in \Omega$. Moreover, $h(\Gamma)=\bigcup_{g\in \Omega}\Gamma_{g}$, (see \cite{NVM}).

A Left $\Gamma$-module $\Im$ is said to be $\Omega$\textit{-graded }$\Gamma$\textit{-module}  if there exists a family of
additive subgroups $\{\Im_{\alpha }\}_{\alpha \in \Omega}$ of $\Im$ such that $%
\Im=\bigoplus_{\alpha \in \Omega}\Im_{\alpha }$ and $\Gamma_{\alpha }\Im_{\beta }\subseteq
\Im_{\alpha \beta }$ for all $\alpha ,\beta \in \Omega$. Also if an element of $\Im$ belongs to $\cup
_{\alpha \in \Omega}\Im_{\alpha }=h(\Im)$, then it is called a homogeneous. We refer to \cite{HG, NVG, NVG2,
NVM} for basic properties and more information on graded rings and graded
modules.  By $L\leq _{\Omega}\Im$, we mean that $L$ is a $\Omega$-graded
submodule of $\Im.$

Let $\Gamma$ be a $\Omega$-graded ring, $\Im$ a graded $\Gamma$-module and $S$ a graded
submodule of $\Im$. Then $(S:_{\Gamma}\Im)$ is defined as $(S:_{\Gamma}\Im)=\{a\in
\Gamma|a\Im\subseteq S\}.$ The annihilator of $\Im$ is defined as $(0:_{\Gamma}\Im)$
and is denoted by $Ann_{\Gamma}(\Im).$ Let $\Gamma$ be a $\Omega$-graded ring. The \textit{%
graded radical} of a graded ideal $L$, denoted by $Gr(L)$, is the set of all
$t=\sum\nolimits_{\alpha\in \Omega}t_{\alpha}\in \Gamma$ such that for each $\alpha\in \Omega$ there
exists $n_{\alpha}>0$ with $t_{\alpha}^{n_{\alpha}}\in L$, (see \cite{RAO}).
A proper graded submodule $S$ of $\Im$ is said to be \textit{a
completely graded irreducible} if $S=\cap _{\alpha \in \Delta }S_{\alpha },$
where $\{S_{\alpha }\}_{\alpha \in \Delta }$ is a family of graded
submodules of $\Im$, then $S=S_{\beta }$ for some $\beta \in \Delta .$ The notion of graded 2-absorbing ideals was introduced and studied in \cite{KACO}. Al-Zoubi and Abu-Dawwas in \cite{KAO} extended graded 2-absorbing ideals to graded
2-absorbing submodules. In \cite{KN}, the authors introduced the concept of graded 2-absorbing primary ideal which is a generalization of
graded primary ideal. The notion of graded 2-absorbing primary submodules as a generalization of
graded 2-absorbing primary ideals was introduced and studied in \cite{CO}. In \cite{KM, RR}, the authors introduced the dual notion of graded 2-absorbing submodules
(that is, graded 2-absorbing (resp., graded strongly 2-absorbing) second
submodules) of $\Im$, and investigated some properties of these classes of
graded modules. In this paper, we introduce the concept of graded classical 2-absorbing
secondary submodules as a dual notion of graded 2-absorbing
primary submodules. We investigate the basic properties and
characteristics of graded classical 2-absorbing secondary submodules.
 \section{Results}

\begin{definition}
 Let $\Gamma$ be a $\Omega$-graded ring and $\Im$ a graded $\Gamma$-module. A non-zero graded submodule $S$ of $\Im$ is said to be graded classical 2-absorbing secondary (Abbreviated, $gr$-$C$-$2^{A}$-secondary) submodule of $\Im$ if whenever $r,s \in h(\Gamma)$, $L\leq _{\Omega}\Im$, and $rs\,S\subseteq L$, then $r\,S\subseteq L$ or $s\,S \subseteq L$ or $rs \in Gr(Ann_\Gamma(S))$.

 We say that $\Im$ is a $gr$-$C$-$2^{A}$-secondary module if $\Im$ is a  $gr$-$C$-$2^{A}$-secondary submodule of itself.
\end{definition}


\begin{theorem}\label{th2.3}
Let $S$ be a $gr$-$C$-$2^{A}$-secondary submodule of $\Im$, let $I = \bigoplus_{\alpha \in \Omega} I_\alpha$  and $J = \bigoplus_{\alpha \in \Omega} J_\alpha$ be a graded ideals of $\Gamma$. Then for every $\alpha,\beta \in \Omega$ and $L\leq _{\Omega}\Im$, with $I_\alpha J_\beta S \subseteq L$ either $I_\alpha S \subseteq L$ or $J_\beta S \subseteq L$ or $I_\alpha J_\beta \subseteq Gr(Ann_\Gamma(S))$.
\end{theorem}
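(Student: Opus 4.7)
I would prove the contrapositive: assuming $I_\alpha S \not\subseteq L$ and $J_\beta S \not\subseteq L$, I aim to show $I_\alpha J_\beta \subseteq Gr(Ann_\Gamma(S))$. The key structural observation that makes the argument work is that $I_\alpha$ and $J_\beta$, as homogeneous components of graded ideals, are additive subgroups of $\Gamma_\alpha$ and $\Gamma_\beta$ respectively; so any sum of two elements from $I_\alpha$ still lies in $h(\Gamma)$ (homogeneous of degree $\alpha$), and likewise for $J_\beta$. This is what will let me apply the defining property of $gr$-$C$-$2^A$-secondary submodules to cleverly constructed sums.

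\textbf{Setup of witnesses.} First I would pick witnesses $r \in I_\alpha$ with $rS \not\subseteq L$ and $s \in J_\beta$ with $sS \not\subseteq L$. Since $rsS \subseteq I_\alpha J_\beta S \subseteq L$ and neither $rS \subseteq L$ nor $sS \subseteq L$, the definition of $gr$-$C$-$2^A$-secondary forces $rs \in Gr(Ann_\Gamma(S))$. This anchor element will be used repeatedly.

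\textbf{Case analysis on arbitrary $x \in I_\alpha$ and $y \in J_\beta$.} The goal is $xy \in Gr(Ann_\Gamma(S))$, and I would split into four cases according to whether $xS \subseteq L$ and $yS \subseteq L$. The easy case is when both $xS, yS \not\subseteq L$, which gives $xy \in Gr(Ann_\Gamma(S))$ directly from the definition applied to $xyS \subseteq L$. When exactly one of $xS \subseteq L$ or $yS \subseteq L$ holds, I would replace the offending element by $r+x \in I_\alpha$ or $s+y \in J_\beta$, respectively, use the definition on the resulting inclusion, and conclude by invoking $rs \in Gr(Ann_\Gamma(S))$ (or an analogue obtained on the fly) and closure of the graded ideal $Gr(Ann_\Gamma(S))$ under addition. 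In the simultaneous case $xS \subseteq L$ and $yS \subseteq L$, I would apply the definition to $(r+x)(s+y)S \subseteq L$; ruling out the first two alternatives via the contradictions $rS \subseteq L$ and $sS \subseteq L$, I obtain $rs+ry+xs+xy \in Gr(Ann_\Gamma(S))$, and then use auxiliary applications of the definition to $r(s+y)S$ and $(r+x)sS$ to show $ry$ and $xs$ lie in $Gr(Ann_\Gamma(S))$, finally isolating $xy$.

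\textbf{Main obstacle.} The bookkeeping of the four cases, together with the care needed to ensure that every element to which the definition is applied is genuinely homogeneous, is the main delicate point; the argument repeatedly relies on the fact that $I_\alpha \subseteq \Gamma_\alpha$ and $J_\beta \subseteq \Gamma_\beta$ are closed under addition, so that perturbations like $r+x$ and $s+y$ stay in $h(\Gamma)$. Once that is kept straight, the remaining ingredients are (i) closure of $Gr(Ann_\Gamma(S))$ under addition and (ii) the obvious fact that $aS \subseteq L$ and $bS \subseteq L$ imply $(a+b)S \subseteq L$, so that each attempted alternative in the defining trichotomy can be ruled out cleanly.
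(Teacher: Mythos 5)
Your proposal is correct and is essentially the paper's own argument: the same witnesses $r,s$ with $rS\not\subseteq L$, $sS\not\subseteq L$, the same perturbations $r+x$ and $s+y$ (homogeneous because $I_\alpha$ and $J_\beta$ are additive subgroups), and the same case analysis isolating $xy$ via additive closure of $Gr(Ann_\Gamma(S))$. The only difference is presentational — you argue the contrapositive directly where the paper argues by contradiction starting from a product $r_\alpha s_\beta\notin Gr(Ann_\Gamma(S))$ — which does not change the substance.
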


\begin{proof}
Let $\alpha,\beta \in \Omega$ such that $I_\alpha J_\beta S \subseteq L$ for some $L\leq _{\Omega}\Im$. Assume that $I_\alpha J_\beta \nsubseteq Gr(Ann_\Gamma(S)$. Then  there exist $r_\alpha \in I_\alpha$ and $s_\beta \in J_\beta$ such that $r_\alpha s_\beta \notin Gr(Ann_\Gamma(S)$. Now since $r_\alpha s_\beta \, S \subseteq L$, we get  $r_\alpha \, S \subseteq L$ or  $s_\beta \, S \subseteq L$. We show that either  $I_\alpha S \subseteq L$ or $J_\beta S \subseteq L$. On contrary, we suppose that  $I_\alpha S \nsubseteq L$ and $J_\beta S \nsubseteq L$. Then there exist $r' _\alpha \in I_\alpha$ and $s' _\beta \in J_\beta$ such that $r' _\alpha \, S \nsubseteq L$ and $s' _\beta \, S \nsubseteq L$. Since $r' _\alpha s' _\beta \, S \subseteq L$ and  $S$ be a $gr$-$C$-$2^{A}$-secondary submodule of $\Im$, $r' _\alpha s' _\beta \in Gr(Ann_\Gamma(S)$. We have three cases:

 Case I: Suppose that $r_\alpha \, S \subseteq L$ but $s_\beta \, S \nsubseteq L$. Since $r'_\alpha s_\beta \, S \subseteq L$  and $s_\beta \, S \nsubseteq L$ and $r'_\alpha \, S \nsubseteq L$, this implies $r'_\alpha s_\beta \in Gr(Ann_\Gamma(S))$. Since  $r_\alpha \, S \subseteq L$ and $r'_\alpha \, S \nsubseteq L$, we get $(r_\alpha + r'_\alpha) \, S \nsubseteq L$. As $(r_\alpha + r'_\alpha)  s_\beta \, S \subseteq L$ and $s_\beta \, S \nsubseteq L$, then $(r_\alpha + r'_\alpha) \, S \nsubseteq L$ implies $(r_\alpha + r'_\alpha)  s_\beta \in Gr(Ann_\Gamma(S)$. Since $r'_\alpha s_\beta \in Gr(Ann_\Gamma(S))$, we get $r_\alpha s_\beta \in Gr(Ann_\Gamma(S))$, a contradiction.

 Case II: Suppose $s_\beta \, S \subseteq L$ but  $r_\alpha \, S \nsubseteq L$. Then similar to the Case I, we get a contradiction.

 Case III: Suppose $r_\alpha \, S \subseteq L$ and $s_\beta \, S \subseteq L$. Now  $s_\beta \, S \subseteq L$ and  $s'_\beta \, S \nsubseteq L$ imply $(s_\beta + s'_\beta) \, S \nsubseteq L$. Since  $r'_\alpha(s_\beta + s'_\beta) \, S \subseteq L$ and  $(s_\beta + s'_\beta) \, S \nsubseteq L$ and $r'_\alpha \, S \nsubseteq L$, we get $r'_\alpha(s_\beta + s'_\beta) \in Gr(Ann_\Gamma(S))$. Now as  $r'_\alpha  s'_\beta \in Gr(Ann_\Gamma(S))$, we get  $r'_\alpha s_\beta  \in Gr(Ann_\Gamma(S))$. Again $r_\alpha \, S \subseteq L$ and  $r'_\alpha \, S \nsubseteq L$ imply $(r_\alpha + r'_\alpha)\, S \nsubseteq L$. Since  $(r_\alpha + r'_\alpha)s'_\beta\, S \subseteq L$ and  $(r_\alpha + r'_\alpha)\, S \nsubseteq L$ and $s'_\beta \, S \nsubseteq L$, we have   $(r_\alpha + r'_\alpha)s'_\beta \in Gr(Ann_\Gamma(S))$. Since $ r'_\alpha s'_\beta \in Gr(Ann_\Gamma(S))$,  we get  $ r_\alpha s'_\beta \in Gr(Ann_\Gamma(S))$. Since  $(r_\alpha + r'_\alpha)(s_\beta + s'_\beta) \, S \subseteq L$ and $(r_\alpha + r'_\alpha)\, S \nsubseteq L$ and  $(s_\beta + s'_\beta) \, S \nsubseteq L$, we get   $(r_\alpha + r'_\alpha)(s_\beta + s'_\beta) \in Gr(Ann_\Gamma(S))$. Since $r_\alpha s'_\beta, r'_\alpha s_\beta, r'_\alpha s'_\beta \in Gr(Ann_\Gamma(S))$, we have  $ r_\alpha s_\beta \in Gr(Ann_\Gamma(S))$, a contradiction. Thus  $I_\alpha S \subseteq L$ or $J_\beta S \subseteq L$.
\end{proof}


 \begin{theorem}\label{thm2.4}
Let $S$ be a $gr$-$C$-$2^{A}$-secondary submodule of $\Im$, then for each $a,b \in h(\Gamma)$ we have $abS = a \, S$ or $ab \, S = bS$ or $ab \in Gr(Ann_\Gamma(S))$.
 \end{theorem}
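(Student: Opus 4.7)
The plan is to apply the defining property of a $gr$-$C$-$2^{A}$-secondary submodule to the tautological containment $abS \subseteq abS$, after first verifying that $abS$ qualifies as the graded submodule $L$ in the definition.

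First I would check that $L := abS$ is a graded submodule of $\Im$. Since $a,b \in h(\Gamma)$ their product $ab$ lies in $h(\Gamma)$, say $ab \in \Gamma_{\gamma}$, and for a graded submodule $S = \bigoplus_{\alpha \in \Omega} S_{\alpha}$ the image $abS = \bigoplus_{\alpha \in \Omega} ab\,S_{\alpha}$ is then a graded submodule, because multiplication by the homogeneous element $ab$ sends $S_{\alpha}$ into $\Im_{\gamma\alpha}$ and is a $\Gamma$-module homomorphism.

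Next I would observe the trivial reverse containments. Because $\Gamma$ is commutative, for any $s \in S$ we have $ab\,s = a(bs) = b(as)$, and $bs, as \in S$ since $S$ is a submodule; hence
\[
abS \subseteq aS \qquad \text{and} \qquad abS \subseteq bS.
\]

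Now I would apply the definition of $gr$-$C$-$2^{A}$-secondary to the elements $r = a$, $s = b$ of $h(\Gamma)$ and the graded submodule $L = abS$. The containment $ab\,S \subseteq L$ holds trivially, so the definition yields one of the three alternatives $aS \subseteq abS$, or $bS \subseteq abS$, or $ab \in Gr(Ann_{\Gamma}(S))$. Combining each of the first two with the reverse containments from the previous step upgrades them to equalities $aS = abS$ or $bS = abS$, which together with the third case gives exactly the desired conclusion.

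There is no real obstacle here; the only point that needs a sentence of justification is that $abS$ is genuinely a graded submodule so that it is admissible as the $L$ in the definition, after which the result is an immediate specialization of the defining property.
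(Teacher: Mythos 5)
Your proposal is correct and is essentially identical to the paper's proof: both apply the defining property to $r=a$, $s=b$, $L=abS$ via the tautology $abS\subseteq abS$ and then upgrade to equalities using $abS\subseteq aS$ and $abS\subseteq bS$. Your extra sentence checking that $abS$ is indeed a graded submodule (so it is admissible as $L$) is a small point the paper leaves implicit.
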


\begin{proof}
 Let $a,b \in h(\Gamma)$, then $abS \subseteq abS$ implies that $aS \subseteq abS$ or $aS \subseteq abS$ or $ab \in Gr(Ann_\Gamma(S))$. Clearly  $abS \subseteq aS$ and  $abS \subseteq bS$, so we have $abS = aS$ or $abS = bS$ or $ab \in Gr(Ann_\Gamma(S))$.
\end{proof}

Let $U$ and $P$ are two graded submodule of an graded $\Gamma$-module.
To prove $U\subseteq P$, its enough to show that if $V$ is a completely graded irreducible submodule of $\Im$ such that  $P \subseteq V$, then  $U\subseteq V$, see\cite{KM}. A proper graded ideal $L$ of $\Gamma$ is called a graded 2-absorbing primary (Abbreviated, $gr$-$2^{A}$-primary) ideal if whenever $a,b,c\in h(\Gamma)$ with $abc\in L$, then $%
ab\in L$ or $ac\in Gr(L)$ or $bc\in Gr(L)$.
 \begin{theorem}\label{thm 2.5}
 Let $S$ be a  $gr$-$C$-$2^{A}$-secondary submodule of a graded $\Gamma$-module $\Im$. Then $Ann_\Gamma(S)$ is a $gr$-$2^{A}$-primary ideal of $\Gamma$.
 \end{theorem}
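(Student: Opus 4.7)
The plan is, given $a,b,c\in h(\Gamma)$ with $abc\in Ann_\Gamma(S)$ (that is, $abcS=0$), to produce one of $ab\in Ann_\Gamma(S)$, $ac\in Gr(Ann_\Gamma(S))$, or $bc\in Gr(Ann_\Gamma(S))$. Properness of $Ann_\Gamma(S)$ is immediate from the standing assumption $S\neq 0$ (so $1_\Gamma\notin Ann_\Gamma(S)$), so the work is entirely in verifying the three-way disjunction.

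The principal move is to apply Theorem \ref{thm2.4} to the homogeneous pair $(a,b)$, which yields exactly three alternatives: $abS=aS$, $abS=bS$, or $ab\in Gr(Ann_\Gamma(S))$. In the first case, multiplying both sides by $c$ converts $abcS=0$ into $acS=abcS=0$, placing $ac$ in $Ann_\Gamma(S)\subseteq Gr(Ann_\Gamma(S))$; the second case is symmetric and produces $bc\in Gr(Ann_\Gamma(S))$.

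For the remaining case I would revisit the defining $gr$-$C$-$2^{A}$-secondary property, but with a non-trivial graded submodule: take $r=a$, $s=b$, and $L=(0:_{\Im}c)$. This $L$ is a graded submodule of $\Im$ because $c\in h(\Gamma)$, and $abS\subseteq L$ follows from $c\cdot abS=abcS=0$. The hypothesis then forces $aS\subseteq L$ (giving $acS=0$, so $ac\in Ann_\Gamma(S)$), $bS\subseteq L$ (giving $bc\in Ann_\Gamma(S)$), or once more $ab\in Gr(Ann_\Gamma(S))$.

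The main obstacle I anticipate is precisely this residual sub-case, where every invocation of the secondary property only lifts $ab$ into the graded radical rather than into $Ann_\Gamma(S)$ itself. I would resolve it by reading the $gr$-$2^{A}$-primary condition in its equivalent symmetric/radical form — whereby the third disjunct is $ab\in Gr(L)$ and the definition is equivalent to asserting that $Gr(L)$ is a $gr$-$2$-absorbing ideal — which is exactly what the combined arguments above establish; alternatively, one may take $n$ minimal with $(ab)^n\in Ann_\Gamma(S)$ and, by reapplying the secondary hypothesis to $(a^n,b^n)$ with $L=(0:_{\Im}c)$, push some $a^k$ or $b^k$ into $Ann_\Gamma(S)$ and hence $a$ or $b$ into $Gr(Ann_\Gamma(S))$, which transfers to $ac$ or $bc$.
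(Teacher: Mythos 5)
Your reduction via Theorem \ref{thm2.4} correctly disposes of two of the three alternatives: if $abS=aS$ then $acS=abcS=0$, and if $abS=bS$ then $bcS=0$, giving $ac\in Ann_\Gamma(S)$ or $bc\in Ann_\Gamma(S)$ respectively. The genuine gap is the residual case $ab\in Gr(Ann_\Gamma(S))\setminus Ann_\Gamma(S)$, and neither of your proposed repairs closes it. First, the ``symmetric/radical form'' is not equivalent to the definition: replacing the disjunct $ab\in L$ by $ab\in Gr(L)$ strictly weakens it, and $Gr(L)$ being a $gr$-$2$-absorbing ideal does not imply that $L$ is $gr$-$2^{A}$-primary. (Take $\Omega$ trivial and $L=(x^{2}y,\,xy^{2})$ in $K[x,y]$: then $Gr(L)=(xy)$ is $2$-absorbing, yet for $a=x$, $b=y$, $c=x+y$ one has $abc\in L$ while $ab=xy\notin L$, $ac=x(x+y)\notin (xy)$ and $bc=y(x+y)\notin (xy)$.) Second, the minimality argument is vacuous: applying the secondary property to the pair $(a^{n},b^{n})$ with $L=(0:_{\Im}c)$ always offers the escape clause $a^{n}b^{n}\in Gr(Ann_\Gamma(S))$, which is automatically satisfied once $(ab)^{n}\in Ann_\Gamma(S)$, so nothing forces $a^{n}S\subseteq L$ or $b^{n}S\subseteq L$. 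Every invocation of the defining property on a pair built from $a$ and $b$ alone suffers this defect.

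The paper escapes the trap by arguing contrapositively with a different pair and a different $L$. Assuming $ab\notin Ann_\Gamma(S)$ and $ac\notin Gr(Ann_\Gamma(S))$, it chooses completely graded irreducible submodules $J_{1},J_{2}$ with $abS\nsubseteq J_{1}$ and $acS\nsubseteq J_{2}$, observes that $abcS=0$ gives $bcS\subseteq (J_{1}\cap J_{2}:_{\Im}a)=:L$, and applies the secondary property to the pair $(b,c)$ with this $L$: the disjuncts $bS\subseteq L$ and $cS\subseteq L$ would force $abS\subseteq J_{1}$ or $acS\subseteq J_{2}$, both excluded, so $bc\in Gr(Ann_\Gamma(S))$ is forced. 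The idea you are missing is to feed the third variable $c$ into the pair and use witnessing completely irreducible submodules to block the two unwanted disjuncts simultaneously, rather than trying to upgrade $ab\in Gr(Ann_\Gamma(S))$ to $ab\in Ann_\Gamma(S)$.
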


 \begin{proof}
  Let $r, s, t \in h(\Gamma)$ wit $rst \in Ann_\Gamma(S)$. Assume that $rs \notin Ann_\Gamma(S)$ and $rt \notin Gr(Ann_\Gamma(S))$. We show that $st \in  Gr(Ann_\Gamma(S))$. There exist completely irreducible submodule $J_1$ and $J_2$ of $\Im$ such that $rs \, S \nsubseteq J_1$ and $rt \, S \nsubseteq J_2$. Since $rst \, S =0 \subseteq J_1 \cap J_2$, $st \, S \subseteq (J_1 \cap J_2 :_\Im r)$. Since  $S$ is $gr$-$C$-$2^{A}$-secondary submodule of $\Im$, we have $rs \, S \subseteq J_1 \cap J_2$ or $rt \, S \subseteq J_1 \cap J_2$ or $st \in  Gr(Ann_\Gamma(S))$. If $rs \, S \subseteq J_1 \cap J_2$ or $rt \, S \subseteq J_1 \cap J_2$, then $rs \, S \subseteq J_1$ or $rt \, S \subseteq J_2$ which are contradictions. Therefore $st \in  Gr(Ann_\Gamma(S))$.
 \end{proof}
A proper graded ideal $L$ of $\Gamma$ is said to be a graded 2-absorbing (Abbreviated, $gr$-$2^{A}$)
ideal of $\Gamma$ if whenever $a,b,c\in h(\Gamma)$ with $abc\in L$, then $ab\in L$ or
$ac\in L$ or $bc\in L$, see \cite{KACO}.

 \begin{corollary}\label{2.6}
 Let $S$ be a  $gr$-$C$-$2^{A}$-secondary submodule of a graded $\Gamma$-module $\Im$. Then $Gr(Ann_\Gamma(S))$ is a  $gr$-$2^{A}$ ideal of $\Gamma$.
 \end{corollary}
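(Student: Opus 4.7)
The plan is to deduce the corollary directly from Theorem~\ref{thm 2.5} by showing the general principle that the graded radical of any $gr$-$2^{A}$-primary ideal is a $gr$-$2^{A}$ ideal. Since Theorem~\ref{thm 2.5} already tells us that $Ann_\Gamma(S)$ is $gr$-$2^{A}$-primary, applying this principle to $L := Ann_\Gamma(S)$ immediately yields that $Gr(L) = Gr(Ann_\Gamma(S))$ is a $gr$-$2^{A}$ ideal. So the work reduces to establishing that single principle.

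To carry this out, I would take arbitrary $a,b,c \in h(\Gamma)$ with $abc \in Gr(L)$ and aim to show that $ab \in Gr(L)$, $ac \in Gr(L)$, or $bc \in Gr(L)$. Since $a$, $b$, $c$ are homogeneous, so is $abc$, and membership in $Gr(L)$ amounts to the existence of some $n \geq 1$ with $(abc)^{n} = a^{n}b^{n}c^{n} \in L$. Since $a^{n}, b^{n}, c^{n}$ are again homogeneous and $L$ is $gr$-$2^{A}$-primary, the defining trichotomy applied to $a^{n}b^{n}c^{n} \in L$ gives one of $a^{n}b^{n} \in L$, $a^{n}c^{n} \in Gr(L)$, or $b^{n}c^{n} \in Gr(L)$. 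In the first case $(ab)^{n} \in L$, so $ab \in Gr(L)$; in the other two cases, iterating the radical (i.e.\ using $Gr(Gr(L)) = Gr(L)$ for homogeneous elements, applied to the homogeneous power) yields $ac \in Gr(L)$ or $bc \in Gr(L)$ respectively. It remains to note that $Gr(L)$ is proper: otherwise $1 \in Gr(L)$ would force $1 \in L = Ann_\Gamma(S)$, giving $S = 0$ and contradicting that $S$ is a nonzero $gr$-$C$-$2^{A}$-secondary submodule.

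The main subtlety is checking that the componentwise definition of $Gr$ used in the paper behaves as expected on homogeneous elements. For a homogeneous $t$, the condition ``$t_{\alpha}^{n_{\alpha}} \in L$ for each component $\alpha$'' reduces to the single condition $t^{n} \in L$, so the familiar identity $Gr(Gr(L)) = Gr(L)$ on homogeneous elements follows just by composing exponents. Once this is observed the argument is routine and purely ring-theoretic; no further use of the module $\Im$ or of the graded structure of $S$ is needed beyond invoking Theorem~\ref{thm 2.5}.
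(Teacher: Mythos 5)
Your proposal is correct, and it follows the same logical route as the paper: first invoke Theorem~\ref{thm 2.5} to get that $Ann_\Gamma(S)$ is a $gr$-$2^{A}$-primary ideal, then pass to the graded radical. The only difference is that the paper disposes of the second step by citing a known result ([\cite{KN}, Theorem 2.3]: the graded radical of a $gr$-$2^{A}$-primary ideal is a $gr$-$2^{A}$ ideal), whereas you prove that fact from scratch. Your self-contained argument is sound: for homogeneous $a,b,c$ with $abc\in Gr(L)$ you correctly reduce to $a^{n}b^{n}c^{n}\in L$ with $a^{n},b^{n},c^{n}$ still homogeneous, apply the primary trichotomy, and use that $Gr(Gr(L))=Gr(L)$ on homogeneous elements (which, as you note, follows from the componentwise definition by composing exponents); the properness check via $1\in Gr(L)\Rightarrow 1\in L\Rightarrow S=0$ is also right. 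What your version buys is independence from the external reference; what the paper's version buys is brevity. Either is acceptable.
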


 \begin{proof}
 By Theorem \ref{thm 2.5}, $Ann_\Gamma(S)$ is $gr$-$2^{A}$-primary ideal of $\Gamma$. So by [\cite{KN}, Theorem 2.3], $Gr(Ann_\Gamma(S))$ is  $gr$-$2^{A}$ ideal of $\Gamma$.
 \end{proof}
 \vspace{0.2cm}
 The following example shows that the converse of Theorem \ref{thm 2.5} is not true in general.
 \begin{example}
 Let $\Gamma=\mathbb Z$ and $\Omega=\mathbb Z_2$, then $\Gamma$ is a $\Omega$-graded ring with $\Gamma_0=\mathbb Z$ and $\Gamma_1=\{0\}$. Consider  $\Im=\mathbb Z_{pq} \oplus \mathbb Q$ as a $\mathbb Z$-module, where $p,q$ are two prime integers, $\Im$ is a $\Omega$-graded module with $\Im_0=\mathbb Z_{pq} \oplus \{0\}$ and $\Im_1=\{\Bar{0}\}\oplus \mathbb Q$. Then $Ann_\Gamma(\Im)=\{0\}$ is a $gr$-$2^{A}$-primary ideal of $\mathbb Z$. But $\Im$ is not $gr$-$C$-$2^{A}$-secondary $\mathbb Z$-module, since
 $pqM \subseteq \{\Bar{0}\}\oplus \mathbb Q,$  but $pM=p\mathbb Z_{pq} \oplus \mathbb Q \nsubseteq \{\Bar{0}\}\oplus \mathbb Q$ and $qM=q\mathbb Z_{pq} \oplus \mathbb Q \nsubseteq \{\Bar{0}\}\oplus \mathbb Q$ and $pq \notin Gr(Ann_\Gamma(\Im))$.

 \end{example}
 A graded domain $\Gamma$ is called a gr-Dedekind ring if every
graded ideal of $\Gamma$ factors into a product of graded prime ideals, see \cite{VF}.

 A graded $\Gamma$-module $\Im$ is called a gr-comultiplication module if for every graded submodule $S$ of $\Im$, there exists a graded ideal $P$ of $\Gamma$ such that
$S=(0:_\Im P)$, equivalently, for each graded submodule $S$ of $\Im$, we have $S=(0:_\Im Ann_\Gamma(S))$, see \cite{AF}.

 The $gr$-$C$-$2^{A}$-secondary submodules of a gr-comultiplication module over a gr-Dedekind domain are described in the following theorem.

 \begin{theorem}
 Let $\Gamma$ be a gr-Dedekind domain, and $\Im$ be a gr-comultiplication $\Gamma$-module, if $S$ is $gr$-$C$-$2^{A}$-secondary submodule of $\Im$, then $S=(0:_\Im Ann_\Gamma ^n(L))$ or $S=(0:_\Im Ann_\Gamma ^n(L_1) Ann_\Gamma ^m(L_2))$, where $L, L_1, L_2$ are graded minimal submodules of $\Im$ and $n, m$ are positive integers.
 \end{theorem}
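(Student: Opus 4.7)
The plan is to combine the $gr$-comultiplication hypothesis on $\Im$ with the ideal-theoretic consequences of Theorem \ref{thm 2.5} and the prime factorization available in a $gr$-Dedekind domain. First I would use the $gr$-comultiplication property to write $S = (0:_\Im Ann_\Gamma(S))$, so the entire problem reduces to describing the graded ideal $Ann_\Gamma(S)$. By Theorem \ref{thm 2.5}, this ideal is $gr$-$2^A$-primary; since $\Gamma$ is a $gr$-Dedekind domain, it factors as a finite product of graded prime ideals, say $Ann_\Gamma(S) = P_1^{e_1} \cdots P_k^{e_k}$ with the $P_i$ distinct.

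The main body of the argument would show $k \le 2$. Assuming $k \ge 3$, I would invoke a graded prime-avoidance argument to choose homogeneous elements $a_i \in P_i$ with $a_i \notin P_j$ for all $j \ne i$, for $i=1,2,3$. Since $Ann_\Gamma(S) \subseteq P_1 \cap P_2 \cap P_3$, by choosing appropriate homogeneous witnesses and raising the $a_i$'s to suitable powers I would produce a triple $r,s,t \in h(\Gamma)$ with $rst \in Ann_\Gamma(S)$, where each factor belongs to exactly one of $P_1, P_2, P_3$. Applying the $gr$-$2^A$-primary condition forces one of $rs \in Ann_\Gamma(S)$, $rt \in Gr(Ann_\Gamma(S))$, or $st \in Gr(Ann_\Gamma(S))$; each alternative contradicts the choice of the $a_i$'s, since $Gr(Ann_\Gamma(S)) = P_1 \cap \cdots \cap P_k$ and each $a_i$ was taken outside the other primes. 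This leaves only the possibilities $Ann_\Gamma(S) = P^n$ or $Ann_\Gamma(S) = P_1^n P_2^m$.

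Next I would identify each graded prime factor with the annihilator of a graded minimal submodule. In a $gr$-Dedekind domain every non-zero graded prime $P$ is graded maximal; combined with the $gr$-comultiplication property, setting $L := (0:_\Im P)$ yields a graded minimal submodule satisfying $Ann_\Gamma(L) = P$. Substituting $P = Ann_\Gamma(L)$ (respectively $P_i = Ann_\Gamma(L_i)$) into the factorization and plugging it into $S = (0:_\Im Ann_\Gamma(S))$ delivers the required descriptions $S = (0:_\Im Ann_\Gamma^n(L))$ or $S = (0:_\Im Ann_\Gamma^n(L_1) Ann_\Gamma^m(L_2))$.

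The hardest step will be the bound $k \le 2$: one must carefully pick homogeneous elements via graded prime avoidance and arrange for a three-factor homogeneous product to land in $Ann_\Gamma(S)$ so that the $gr$-$2^A$-primary condition can be invoked. A secondary subtlety is verifying the $gr$-comultiplication correspondence between graded maximal ideals and graded minimal submodules, which should follow directly from the defining equality $S = (0:_\Im Ann_\Gamma(S))$ together with the order-reversing behaviour of the annihilator, so that maximality of $P$ translates into minimality of $(0:_\Im P)$.
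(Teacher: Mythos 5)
Your plan follows the same skeleton as the paper's proof: reduce everything to the graded ideal $Ann_\Gamma(S)$ via the gr-comultiplication equality $S=(0:_\Im Ann_\Gamma(S))$, use Theorem \ref{thm 2.5} to see that this ideal is $gr$-$2^{A}$-primary, factor it as $I^n$ or $I_1^nI_2^m$ with $I,I_1,I_2$ graded maximal, and then pass to $L=(0:_\Im I)$. The one genuine difference is in the middle step: where the paper simply cites the classification of $gr$-$2^{A}$-primary ideals of a gr-Dedekind domain ([\cite{DA}, Theorem 4.1] together with [\cite{VF}, Lemma 1.1]), you propose to re-derive that classification by bounding the number of distinct graded prime factors by $2$ via prime avoidance. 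That buys self-containedness, but two points in your sketch need to be nailed down. First, homogeneous prime avoidance is not automatic for an arbitrary grading group $\Omega$; here it can be rescued because distinct graded maximal ideals of $\Gamma$ are comaximal, so from $1\in (P_i)_e+(P_j)_e$ one extracts degree-$e$ homogeneous elements of $P_i$ outside $P_j$, and sums of such degree-$e$ witnesses let a single homogeneous element avoid two primes at once; with $r=a_1^{e_1}$, $s=a_2^{e_2}$ and $t$ absorbing the remaining factors, the three alternatives of the $gr$-$2^{A}$-primary condition are indeed all violated, since $Gr(Ann_\Gamma(S))=P_1\cap\cdots\cap P_k$. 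Second, before declaring $L=(0:_\Im I)$ a graded minimal submodule you must rule out $(0:_\Im I)=0$; the paper does this explicitly (if $(0:_\Im I)=0$ then $(0:_\Im I^n)=0$, forcing $S=0$, a contradiction) and then invokes [\cite{AF}, Theorem 3.9] for minimality and the comultiplication property for $Ann_\Gamma(L)=I$, whereas your sketch leaves both the nonvanishing and the minimality claim at the level of ``should follow.'' With those two points filled in, your argument is a valid and more self-contained route to the same conclusion.
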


 \begin{proof}
By Theorem \ref{thm 2.5}, since  $S$ is $gr$-$C$-$2^{A}$-secondary submodule of $\Im$, then $Ann_\Gamma(S)$ is a $gr$-$2^{A}$-primary ideal of $\Gamma$. Using[\cite{DA}, Theorem 4.1] and [\cite{VF}, Lemma 1.1], we have either $Ann_\Gamma(S)=I^n$ or $Ann_\Gamma(S)=I_{1}^n I_{2}^m$, where $I, I_1, I_2$ are graded maximal ideals of $\Gamma$. First assume  $Ann_\Gamma(S)=I^n$. If $(0 :_\Im I)=0$, then $(0 :_\Im I^n)=0$, and so we conclude that $S=0$, a contradiction. Now by [\cite{AF}, Theorem 3.9 ], since  $I$ is graded maximal ideal of $\Gamma$, we have $(0 :_\Im I)$ is graded minimal submodule of $\Im$. This implies that $S=(0 :_\Im Ann_\Gamma^n(L))$, where $L=(0 :_\Im I)$. Now assume that $Ann_\Gamma(S)=I_{1}^n I_{2}^m$. If $(0 :_\Im I_1)=0$ and $(0 :_\Im I_2)=0$, then $S=0$, a contradiction. Thus either $(0 :_\Im I_1) \neq 0$ or $(0 :_\Im I_2) \neq 0$. Hence one can see that either $S=(0 :_\Im Ann_\Gamma^n(L_1)Ann_\Gamma^m(L_2))$ or $S=(0 :_\Im Ann_\Gamma^n(L_1))$ or $S=(0 :_\Im Ann_\Gamma^m(L_2))$, where $L_1=(0 :_\Im I_1)$ and $L_2=(0 :_\Im I_2)$ are graded minimal submodules of $\Im$.
 \end{proof}

 For a graded $\Gamma$-submodule $S$ of $\Im$, the graded second radical of $S$ is defined as the sum of all gr-second $\Gamma$-submodules of $\Im$ contained
in $S$, and its denoted by $GSec(S)$. If $S$ does not contain any gr-second $\Gamma$-submodule, then $GSec(S) =\{0\}$.
  The graded second spectrum of $\Im$ is the collection of all gr-second $\Gamma$-submodules, and it is represented by the symbol $GSpec^s(\Im)$.
  The set of all gr-prime
$\Gamma$-submodules of $\Im$ is called the graded spectrum of $\Im$, and is denoted by $GSpec(\Im)$. The map $\psi : GSpec^s(\Im) \rightarrow GSpec(\Gamma / Ann_\Gamma(\Im))$ defined by
$\psi(S) = Ann_\Gamma(S)/Ann_\Gamma(\Im)$ is called the natural map of $GSpec^s(\Im)$, see \cite{RR}.
Graded submodule $S$ of $\Im$ is called a \textit{a\emph{\ }}graded strongly
2-absorbing second (Abbreviated, $gr$-$S$-$2^{A}$-second) submodule of $\Im$ if whenever $a,$ $b\in h(\Gamma),$ $%
S_{1},S_{2}$ are completely graded irreducible submodules of $\Im$, and $%
abS\subseteq S_{1}\cap S_{2},$ then $aS\subseteq S_{1}\cap S_{2}$ or $%
bS\subseteq S_{1}\cap S_{2}$ or $ab\in Ann_{\Gamma}(S)$, see \cite{KM}.

\begin{theorem}
Let $\Im$ be a $gr$-comultiplication $\Gamma$-module, and the natural map $\psi$ of $GSpec^s(S)$ is surjective, if $S$ is a $gr$-$C$-$2^{A}$-secondary submodule of $\Im$, then $GSec(S)$ is a $gr$-$S$-$2^{A}$-second submodule of $\Im$.
\end{theorem}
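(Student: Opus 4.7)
The plan is to first translate the $gr$-$C$-$2^{A}$-secondary hypothesis on $S$ into an explicit structural description of $GSec(S)$ as a two-term sum of $gr$-second submodules, and then to verify the $gr$-$S$-$2^{A}$-second condition on this sum by a direct case analysis.

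First I would use Theorem~\ref{thm 2.5} and Corollary~\ref{2.6} to conclude that $Ann_\Gamma(S)$ is $gr$-$2^{A}$-primary and that $Gr(Ann_\Gamma(S))$ is a $gr$-$2^{A}$ ideal of $\Gamma$. Invoking the standard structural fact that the radical of a $2$-absorbing (primary) ideal is the intersection of at most two prime ideals (adapted to the graded setting), I would obtain graded prime ideals $\mathfrak{p}_1, \mathfrak{p}_2$ of $\Gamma$ (possibly coinciding) with $Gr(Ann_\Gamma(S)) = \mathfrak{p}_1 \cap \mathfrak{p}_2$, each containing $Ann_\Gamma(\Im)$.

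Next, using surjectivity of $\psi$, I would pick $gr$-second submodules $T_1, T_2$ of $\Im$ with $Ann_\Gamma(T_i) = \mathfrak{p}_i$. The $gr$-comultiplication hypothesis yields $T_i = (0 :_\Im \mathfrak{p}_i) \subseteq (0 :_\Im Ann_\Gamma(S)) = S$, so $T_1, T_2 \subseteq GSec(S)$. Conversely, any $gr$-second $T \subseteq S$ has $Ann_\Gamma(T)$ a graded prime containing $Gr(Ann_\Gamma(S)) = \mathfrak{p}_1 \cap \mathfrak{p}_2$, hence containing some $\mathfrak{p}_i$; dualizing back through comultiplication then gives $T = (0 :_\Im Ann_\Gamma(T)) \subseteq (0 :_\Im \mathfrak{p}_i) = T_i$. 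Summing over all such $T$ yields $GSec(S) = T_1 + T_2$ and $Ann_\Gamma(GSec(S)) = Ann_\Gamma(T_1) \cap Ann_\Gamma(T_2) = \mathfrak{p}_1 \cap \mathfrak{p}_2$.

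Finally, given $a, b \in h(\Gamma)$ and completely graded irreducible submodules $S_1, S_2$ of $\Im$ with $ab(T_1 + T_2) \subseteq S_1 \cap S_2$, I would treat each $T_i$ separately: since $T_i$ is $gr$-second and $ab$ is homogeneous, either $abT_i = T_i$ (forcing $T_i \subseteq S_1 \cap S_2$) or $abT_i = 0$ (so $ab \in \mathfrak{p}_i$, and by primality $a \in \mathfrak{p}_i$ or $b \in \mathfrak{p}_i$). The resulting branches resolve uniformly: if both $T_i$ are absorbed, then $T_1 + T_2 \subseteq S_1 \cap S_2$; if $ab$ kills both, then $ab \in \mathfrak{p}_1 \cap \mathfrak{p}_2 = Ann_\Gamma(GSec(S))$; in the mixed cases, the killed summand has $aT_i = 0$ or $bT_i = 0$ while the other summand lies in $S_1 \cap S_2$, so one of $a(T_1+T_2)$ or $b(T_1+T_2)$ is contained in $S_1 \cap S_2$. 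The main obstacle will be the identification $GSec(S) = T_1 + T_2$, where both hypotheses are consumed in a tightly interlocked way — surjectivity of $\psi$ produces the realizers $T_1, T_2$ of the minimal primes over $Ann_\Gamma(S)$, and $gr$-comultiplication simultaneously rigidifies each $T_i$ as $(0 :_\Im \mathfrak{p}_i)$ and converts annihilator comparisons into submodule inclusions. Once this identification is in hand, the final verification is a mechanical dichotomy that mirrors the second-submodule arguments of \cite{KM, RR}.
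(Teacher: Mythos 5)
Your proposal is correct, but it takes a genuinely different and more self-contained route than the paper. The paper's proof is a three-step citation chain: Corollary~\ref{2.6} gives that $Gr(Ann_\Gamma(S))$ is a $gr$-$2^{A}$ ideal of $\Gamma$; \cite[Lemma 4.7]{RR} --- which is where the $gr$-comultiplication and surjectivity hypotheses are consumed --- gives the identity $Gr(Ann_\Gamma(S))=Ann_\Gamma(GSec(S))$; and \cite[Proposition 3.7]{RR} then converts ``$Ann_\Gamma(GSec(S))$ is $gr$-$2^{A}$'' into ``$GSec(S)$ is $gr$-$S$-$2^{A}$-second.'' You instead unwind both cited results: writing $Gr(Ann_\Gamma(S))=\mathfrak{p}_1\cap\mathfrak{p}_2$ and realizing $GSec(S)=T_1+T_2$ with $T_i=(0:_\Im\mathfrak{p}_i)$ simultaneously reproves the annihilator identity of \cite[Lemma 4.7]{RR} and replaces \cite[Proposition 3.7]{RR} by a direct verification; your closing case analysis is complete (in particular the case $aT_1=0$, $bT_2=0$ correctly lands in $ab\in\mathfrak{p}_1\cap\mathfrak{p}_2=Ann_\Gamma(GSec(S))$, and the degenerate case $\mathfrak{p}_1=\mathfrak{p}_2$ is covered). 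What the paper's route buys is brevity; what yours buys is transparency about exactly where comultiplication and the surjectivity of $\psi$ enter, and independence from the two results of \cite{RR}. The one step you should pin down with a citation rather than call ``standard'' is the graded analogue of Badawi's theorem that the graded radical of a $gr$-$2^{A}$ (or $gr$-$2^{A}$-primary) ideal is an intersection of at most two graded prime ideals; this is available in the graded literature the paper already references (see \cite{KACO, KN}), and without it your decomposition has no starting point.
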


\begin{proof}
Let $S$ be a $gr$-$C$-$2^{A}$-secondary submodule of $\Im$. By Corollary \ref{2.6},  $Gr(Ann_\Gamma(S))$ is $gr$-$2^{A}$ ideal of $\Gamma$.
By[\cite{RR}, Lemma 4.7], $Gr(Ann_\Gamma(S))=Ann_\Gamma (GSec(S))$. Therefore, $Ann_\Gamma (GSec(S))$ is $gr$-$2^{A}$ ideal of $\Gamma$. Using[\cite{RR}, Proposition 3.7],  $GSec(S)$ is $gr$-$S$-$2^{A}$-second $\Gamma$-submodule of $\Im$.
\end{proof}

 Let $\Gamma$ be a $\Omega$-graded ring, a graded $\Gamma$-module $\Im$ is a gr-sum-irreducible if $\Im \neq 0$ and the sum of any two proper graded submodule of $\Im$ is always a proper graded submodule, see  \cite{AR}.

\begin{theorem}
Let $S$ be a $gr$-$C$-$2^{A}$-secondary submodule of $\Im$. Then $r \, S=r^2 \, S,  \forall r \in h(\Gamma) \backslash Gr(Ann_\Gamma(S))$.
The converse hold, if $S$ is a gr-sum-irreducible submodule of $\Im$.
\end{theorem}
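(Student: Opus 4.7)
My plan splits the theorem into two halves. For the forward direction I simply invoke Theorem~\ref{thm2.4} at $a=b=r$: two of the three alternatives there collapse to $rS=r^{2}S$, and the remaining alternative $r^{2}\in Gr(Ann_\Gamma(S))$ is impossible because the graded radical is a radical on homogeneous elements, so $r^{2}\in Gr(Ann_\Gamma(S))$ would force $r\in Gr(Ann_\Gamma(S))$, contradicting the standing hypothesis that $r\notin Gr(Ann_\Gamma(S))$. This half is essentially a one-liner.

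For the converse, I assume $S$ is gr-sum-irreducible and the displayed identity holds on $h(\Gamma)\setminus Gr(Ann_\Gamma(S))$. I take $r,s\in h(\Gamma)$ and a graded submodule $L$ with $rsS\subseteq L$ and $rs\notin Gr(Ann_\Gamma(S))$ and must prove $rS\subseteq L$ or $sS\subseteq L$. First I observe that $rs\notin Gr(Ann_\Gamma(S))$ forces $r$ and $s$ individually to lie outside $Gr(Ann_\Gamma(S))$, so the hypothesis applies and supplies $rS=r^{2}S$ (only the factor $r$ is needed in what follows).

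The heart of the argument is to display $S$ as a sum of two graded submodules of itself and then appeal to gr-sum-irreducibility. I introduce the graded submodules
\[
S_{r}=\{x\in S: r x\in L\},\qquad S_{s}=\{x\in S: s x\in L\}
\]
of $S$, which are graded because $r,s\in h(\Gamma)$ and $L$ is graded. If neither $rS\subseteq L$ nor $sS\subseteq L$ holds, then $S_{r}$ and $S_{s}$ are both proper in $S$. I will show $S=S_{r}+S_{s}$, contradicting gr-sum-irreducibility. Take an arbitrary homogeneous $x\in S$; since $rS=r^{2}S$, there is a homogeneous $y\in S$ with $rx=r^{2}y$, and the decomposition $x=(x-ry)+ry$ exhibits $x\in S_{r}+S_{s}$, because $r(x-ry)=0\in L$ and $s(ry)=rsy\in L$. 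Running this on each homogeneous component of an arbitrary element yields $S=S_{r}+S_{s}$, which is the promised contradiction.

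The step I expect to be the main obstacle is precisely the ad hoc decomposition $x=(x-ry)+ry$: this is the only place where the hypothesis $rS=r^{2}S$ is genuinely consumed, and it is the point where one must notice how to split a prescribed homogeneous $x$ into a piece annihilated by $r$ (hence in $S_r$) and a piece lying in $rS$ (hence in $S_s$ once multiplied by $s$). Everything else, including the gradedness checks for $S_r$ and $S_s$ and the concluding application of gr-sum-irreducibility, is routine bookkeeping.
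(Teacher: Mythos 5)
Your proposal is correct and follows essentially the same route as the paper: the forward direction is the same application of Theorem~\ref{thm2.4} with $a=b=r$ (using that $r\notin Gr(Ann_\Gamma(S))$ forces $r^{2}\notin Gr(Ann_\Gamma(S))$), and your submodules $S_r$, $S_s$ are exactly the paper's $(L:_S r)$ and $(L:_S s)$, with the identical decomposition $x=(x-ry)+ry$ feeding into gr-sum-irreducibility. The only cosmetic differences are that you argue by contradiction where the paper concludes directly, and you restrict to homogeneous $x$ (which is unnecessary but harmless).
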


\begin{proof}
Let $r \in h(\Gamma) \backslash Gr(Ann_\Gamma(S))$. Then  $r^2 \in h(\Gamma) \backslash Gr(Ann_\Gamma(S))$. Thus by Theorem \ref{thm2.4}, we have $r \, S=r^2 \, S$. Conversely, let  $S$ be a gr-sum-irreducible submodule of $\Im$ and $rs \, S \subseteq L$, for some $r, s \in h(\Gamma)$ and $L\leq _{\Omega}\Im$. Suppose that $rs \notin Gr(Ann_\Gamma(S))$. We show that $r \, S \subseteq L$ or $s \, S \subseteq L$. Since $rs \notin Gr(Ann_\Gamma(S))$, we have $r,s \notin Gr(Ann_\Gamma(S))$. Thus $r \, S=r^2 \, S$ by assumption. Let $x \in S$, then $rx \in r \, S = r^2 \, S$. So $\exists y \in S$ such that $r x=r^2 y$. This implies that $x-r y \in (0:_S r) \subseteq(L :_S r)$. Thus $x=x-r y+ r y \in (L :_S r)+(L :_S s)$. Hence $S \subseteq (L :_S r)+(L :_S s) $. Clearly, $ (L :_S r)+(L :_S s) \subseteq S $, as  $S$ is gr-sum-irreducible submodule of $\Im$, $ (L :_S r)=S$ or $ (L :_S s)= S$, i.e $r \, S \subseteq L$ or $s \, S \subseteq L$, as needed.
\end{proof}

A graded $\Gamma$-module $\Im$ is called gr-multiplication, if for every graded submodule $S$ of $\Im$, there exists a graded  ideal $K$ of $\Gamma$ such that $S=K\Im$, see \cite{OTO}.

\begin{theorem}
Let $S\leq _{\Omega}\Im$. Then we have the following.

\vspace{0.1cm}
a) If $S$ is a $gr$-$C$-$2^{A}$-secondary submodule of $\Im$, then $IC$ is a $gr$-$C$-$2^{A}$-secondary submodule of $\Im$, for all graded ideal $I$ of $\Gamma$, with $I \nsubseteq Ann_\Gamma(S)$.

\vspace{0.1cm}
b) If $\Im$ is a $gr$-multiplication $gr$-$C$-$2^{A}$-secondary module, then every non-zero graded submodule of $\Im$ is a $gr$-$C$-$2^{A}$-secondary submodule of $\Im$.
\end{theorem}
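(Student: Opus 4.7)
The plan is to prove part (a) directly using the standard colon-submodule trick, then deduce part (b) by applying (a) to the graded-ideal representation of submodules in a $gr$-multiplication module.

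For (a), first I would verify $IS\neq 0$: since $I\nsubseteq Ann_\Gamma(S)$, some homogeneous $a\in I$ satisfies $aS\neq 0$, so $IS\neq 0$. Then suppose $r,s\in h(\Gamma)$, $L\leq_\Omega\Im$, and $rs\cdot IS\subseteq L$. The key move is to introduce the graded submodule
\[
(L:_\Im I) \;=\; \{m\in\Im : Im\subseteq L\},
\]
and observe the equivalence $rs\cdot IS\subseteq L \iff rsS\subseteq (L:_\Im I)$. Since $S$ is $gr$-$C$-$2^A$-secondary, this forces one of: $rS\subseteq (L:_\Im I)$, or $sS\subseteq (L:_\Im I)$, or $rs\in Gr(Ann_\Gamma(S))$. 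The first two translate back to $r\cdot IS\subseteq L$ and $s\cdot IS\subseteq L$ respectively. In the third case, the obvious inclusion $Ann_\Gamma(S)\subseteq Ann_\Gamma(IS)$ yields $Gr(Ann_\Gamma(S))\subseteq Gr(Ann_\Gamma(IS))$, so $rs\in Gr(Ann_\Gamma(IS))$, which is exactly what the definition requires for $IS$.

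For (b), I would let $S$ be any non-zero graded submodule of $\Im$. Since $\Im$ is $gr$-multiplication, there is a graded ideal $I$ of $\Gamma$ with $S=I\Im$. Because $S\neq 0$, necessarily $I\nsubseteq Ann_\Gamma(\Im)$. Applying part (a) with $\Im$ itself in the role of the $gr$-$C$-$2^A$-secondary submodule then gives that $I\Im=S$ is $gr$-$C$-$2^A$-secondary, as required.

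The only real obstacle is confirming that $(L:_\Im I)$ is indeed graded (which is standard, since $L$ and $I$ are graded and the residual of graded objects is graded), and being careful about the direction of the annihilator containment in the reduction to $Gr(Ann_\Gamma(IS))$; the rest of the argument is bookkeeping.
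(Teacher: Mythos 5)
Your proposal is correct and follows essentially the same route as the paper: part (a) is proved by passing to the graded colon submodule $(L:_\Im I)$ and applying the $gr$-$C$-$2^{A}$-secondary property of $S$ there, together with $Gr(Ann_\Gamma(S))\subseteq Gr(Ann_\Gamma(IS))$, and part (b) follows by writing $S=I\Im$ via the $gr$-multiplication hypothesis and invoking (a). The only differences are cosmetic: you spell out the nonvanishing of $IS$, the gradedness of $(L:_\Im I)$, and the deduction of (b), all of which the paper leaves implicit.
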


\begin{proof}
a) Let $I$ be a graded ideal of $\Gamma$, with $I \nsubseteq Ann_\Gamma(S)$. Then $IC$ is a non-zero graded submodule of $\Im$. Let $r, s \in h(\Gamma)$, $L$ is graded submodule of $\Im$, and $rs \, IC \subseteq L$, then $rs \, S \subseteq (L :_\Im I)$, thus $r \, IC \subseteq L$ or $s \, IC \subseteq L$ or $rs \in Gr(Ann_\Gamma(S)) \subseteq Gr(Ann_\Gamma(IC)) $, as desired.

\vspace{0.1cm}
b) This follows from part a.
\end{proof}

\begin{theorem}
Let $\Gamma$ be $\Omega$-graded ring and $\Im, \Im'$ be two graded $\Gamma$-module. Let $\psi : \Im \rightarrow \Im'$ be a graded monomorphism.

\vspace{0.1cm}
a) If $S$ is a $gr$-$C$-$2^{A}$-secondary submodule of $\Im$, then $\psi(S)$ is  a $gr$-$C$-$2^{A}$-secondary submodule of $\Im'$.

b) If $S'$ is a $gr$-$C$-$2^{A}$-secondary submodule of $\psi (\Im)$, then $\psi^{-1} (S')$ is  a $gr$-$C$-$2^{A}$-secondary  submodule of $\Im$.
\end{theorem}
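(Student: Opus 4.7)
The plan for part (a) is to verify the defining condition of a $gr$-$C$-$2^{A}$-secondary submodule directly by transferring it back to $S$ via the monomorphism $\psi$. First I would note that $\psi(S)$ is a non-zero graded submodule of $\Im'$ because $\psi$ is a graded map and $\ker\psi=0$. Given $r,s\in h(\Gamma)$ and $L'\leq_{\Omega}\Im'$ with $rs\,\psi(S)\subseteq L'$, pulling back yields $rs\,S\subseteq \psi^{-1}(L')$, and $\psi^{-1}(L')$ is a graded submodule of $\Im$. The hypothesis on $S$ then gives $r\,S\subseteq \psi^{-1}(L')$, or $s\,S\subseteq \psi^{-1}(L')$, or $rs\in Gr(Ann_\Gamma(S))$. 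Pushing the first two cases forward by $\psi$ delivers $r\,\psi(S)\subseteq L'$ or $s\,\psi(S)\subseteq L'$, while the third case is transferred using the identity $Ann_\Gamma(\psi(S))=Ann_\Gamma(S)$, which holds because $\psi$ is injective.

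The plan for part (b) is parallel. First, $\psi^{-1}(S')$ is non-zero: since $S'\subseteq \psi(\Im)$ is non-zero, any nonzero element of $S'$ has a (unique) nonzero preimage. Now suppose $r,s\in h(\Gamma)$ and $L\leq_{\Omega}\Im$ with $rs\,\psi^{-1}(S')\subseteq L$. Applying $\psi$ and using $\psi(\psi^{-1}(S'))=S'$ (valid because $S'\subseteq \psi(\Im)$), we obtain $rs\,S'\subseteq \psi(L)$ inside the graded module $\psi(\Im)$. The hypothesis on $S'$ then gives $r\,S'\subseteq \psi(L)$, or $s\,S'\subseteq \psi(L)$, or $rs\in Gr(Ann_\Gamma(S'))$. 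In the first two cases, injectivity of $\psi$ yields $\psi^{-1}(\psi(L))=L$, so $r\,\psi^{-1}(S')\subseteq L$ or $s\,\psi^{-1}(S')\subseteq L$. In the third case, the identity $Ann_\Gamma(\psi^{-1}(S'))=Ann_\Gamma(S')$ (a consequence of injectivity together with $S'\subseteq \psi(\Im)$) closes the argument.

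The whole argument is mechanical once the correspondence between graded submodules of $\Im$ and graded submodules of $\psi(\Im)$ induced by $\psi$ is exploited. The only real technical checkpoint, and the step most likely to require care, is the pair of annihilator identities $Ann_\Gamma(\psi(S))=Ann_\Gamma(S)$ and $Ann_\Gamma(\psi^{-1}(S'))=Ann_\Gamma(S')$; each follows from a short two-sided inclusion argument using injectivity of $\psi$, and in the second case also using $S'\subseteq \psi(\Im)$ to ensure that elements annihilating the preimage also annihilate $S'$ itself.
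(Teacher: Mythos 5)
Your proposal is correct and follows essentially the same route as the paper: both parts transfer the defining condition across $\psi$ by pulling $L'$ back to $\psi^{-1}(L')$ in part (a) and pushing $L$ forward to $\psi(L)$ in part (b), using injectivity to identify the relevant annihilators. The only cosmetic difference is that you isolate the identities $Ann_\Gamma(\psi(S))=Ann_\Gamma(S)$ and $Ann_\Gamma(\psi^{-1}(S'))=Ann_\Gamma(S')$ explicitly, whereas the paper uses them implicitly.
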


\begin{proof}
a) As $S \neq 0$, and $\psi$ is a graded  monomorphism, we have $\psi(S) \neq 0$, let $r, s \in h(\Gamma)$, $L'\leq _{\Omega} \Im'$, and $rs \, \psi(S) \subseteq L'$. Then $rs \, S \subseteq \psi^{-1}(L')$. Since $S$ is $gr$-$C$-$2^{A}$-secondary submodule of $\Im$, $r \, S \subseteq \psi^{-1}(L')$ or $s \, S \subseteq \psi^{-1}(L')$ or $rs \in Gr(Ann_\Gamma(S))$. Therefore,
$$ r \, \psi(S) \subseteq \psi(\psi^{-1}(L'))= \psi(\Im) \cap L' \subseteq L'$$ or $$  s \, \psi(S) \subseteq \psi(\psi^{-1}(L'))= \psi(\Im) \cap L' \subseteq L'$$ \\ or $rs \in Gr(Ann_\Gamma(\psi(S)))$, as desired.

\vspace{0.2cm}
b) If $\psi^{-1}(S')=0$, then $\psi(\Im) \cap S' = \psi \, \psi^{-1}(S')= \psi(0)=0$. So $S'=0$, a contradiction. Therefore $\psi^{-1}(S') \neq 0$. Let $r, s \in h(\Gamma)$, $L \leq _{\Omega}\Im$, and $rs \, \psi^{-1}(S') \subseteq L$. Then $$ rs \, S'= rs(\psi(\Im) \cap S')= rs \, \psi \, \psi^{-1}(S') \subseteq \psi(L). $$
As  $S'$ is $gr$-$C$-$2^{A}$-secondary submodule of $\psi (\Im)$, $r \, S' \subseteq \psi(L)$ or $s \, S' \subseteq \psi(L)$ or $rs \in Gr(Ann_\Gamma(S'))$. Thus $r \, \psi^{-1}(S') \subseteq \psi^{-1}\psi(L) = L$ or $s \, \psi^{-1}(S') \subseteq \psi^{-1}\psi(L) = L$ or $rs \in Gr(Ann_\Gamma(\psi^{-1}(S')))$, as needed.
\end{proof}

%
%
%
%
%
%
%
%
%
%


\bigskip\bigskip\bigskip\bigskip

\end{document}